\newtheorem{Lemma}{Lemma}
\newtheorem{Theorem}[Lemma]{Theorem}
\newtheorem{Definition}{Definition}
\renewcommand{\qed}{\hfill{\ \ \rule{2mm}{2mm}} \vspace{0.2in}}
\begin{document}

\title{Linear recurrences over a finite field with exactly two periods}
\author{ \textbf{Ghurumuruhan Ganesan}
\thanks{E-Mail: \texttt{gganesan82@gmail.com} } \\
\ \\
Institute of Mathematical Sciences, HBNI, Chennai}
\date{}
\maketitle

\begin{abstract}
In this paper, we study the periodicity structure of finite field linear
recurring sequences whose period is not necessarily maximal
and determine necessary and sufficient conditions for
the characteristic polynomial~\(f\) to have exactly two periods in the sense that the period of any sequence generated
by~\(f\) is either one or a unique integer greater than one.

\vspace{0.1in} \noindent \textbf{Key words:} Linear recurring sequences, characteristic polynomial, exactly two periods.

\vspace{0.1in} \noindent \textbf{AMS 2010 Subject Classification:} 11T71, 12Y05, 68R01;
\end{abstract}

\bigskip

\renewcommand{\theequation}{\thesection.\arabic{equation}}
\setcounter{equation}{0}
\section{Introduction} \label{intro}
Sequences generated from linear recursions  are extensively used in practice due to their periodic nature. The characteristic polynomial which results in finite field sequences with maximal periods have been studied before (Golomb (1967), Lidl and Niederreiter (1986)) in various contexts (see for example  Tsaban and Vishne (2002), Goltvanitsa et al.\ (2012)). Recently Quijada (2015) and Bush and Quijada (2018) have investigated recurrent sequences over rings and finite fields and studied decomposition of the set of sequences into periodic and null sequences and determine the set of all possible least periods associated with polynomials of a given degree and a finite field.

In this paper, we study biperiodic characteristic polynomials that generate sequences whose period is either one or a unique integer larger than one,
irrespective of the initial state. We begin with a discussion of linear recurrent
sequences.
\subsection{Model Description}
Let~\(GF(q)\) denote the finite field containing~\(q\) elements and consider a sequence~\(\{a_n\}_{n \geq 0}\) in~\(GF(q)\) obtained recursively as
\begin{equation}\label{an_rec}
a_n = \sum_{i=1}^{r} c_i a_{n-i},
\end{equation}
where~\(\{c_i\}_{1 \leq i \leq r}, c_r \neq 0\) are elements of~\(GF(q)\) and~\(\mathbf{s}_0 := (a_{-r},a_{-r+1},\ldots,a_{-1})\) is an~\(r\)-tuple forming the \emph{initial state}. The summation in~(\ref{an_rec}) is the addition corresponding to the elements of~\(GF(q)\) and throughout a state always refers to an~\(r\)-tuple with entries in~\(GF(q).\) The set~\({\cal S} := (GF(q))^r\) of all~\(r\)-tuples with entries in~\(GF(q)\) is the \emph{state space}.

In what follows, we derive the cycle structure of the state space~\({\cal S}\) using intuitive arguments and refer to Lemma~\ref{lem_cyc} of Section~\ref{pf_thm1} for formal proofs. Letting~\(\mathbf{s}_1 = (a_{-r+1},\ldots,a_0),\) we say that the state~\(\mathbf{s}_0\) \emph{leads to} the state~\(\mathbf{s}_1\) and denote it as~\(\mathbf{s}_0 \rightarrow \mathbf{s}_1.\) Because the recurrence in~(\ref{an_rec}) is linear, each state~\(\mathbf{s}\) of~\({\cal S}\) leads to a unique state~\(\mathbf{t}\) and is in turn obtained from another unique state~\(\mathbf{u};\) i.e.\ for each~\(\mathbf{s} \in {\cal S}\) there are unique states~\(\mathbf{u}, \mathbf{t}\) such that~\(\mathbf{u} \rightarrow \mathbf{s} \rightarrow \mathbf{t}\)  (see Lemma~\ref{lem_cyc}~\((i)\) of Section~\ref{pf_thm1}).

Suppose~\(\mathbf{s}_0 \rightarrow \mathbf{s}_1 \rightarrow \mathbf{s}_2 \rightarrow \cdots\) is the sequence of states obtained while generating the sequence~\(\{a_n\}_{n \geq 0}\) with initial state~\(\mathbf{s}_0.\) Since the number of states in~\({\cal S}\) is finite, the sequence~\(\{\mathbf{s}_i\}_{i \geq 1}\) repeats itself and so there are integers~\(i,k \geq 1\) such that~\(\mathbf{s}_i = \mathbf{s}_{i+k}\) and all the states~\(\mathbf{s}_j, i \leq j \leq i+k-1\) are distinct. We then define
\begin{equation}\label{cyc_def}
\mathbf{s}_i \rightarrow \mathbf{s}_{i+1} \rightarrow \dots \rightarrow \mathbf{s}_{i+k-1} \rightarrow \mathbf{s}_{i+k} = \mathbf{s}_i
\end{equation}
to be a \emph{cycle} and denote it by~\({\cal C}.\)

Because each state leads to a unique state and is obtained from another unique state, we deduce from the above paragraph that each state of~\({\cal S}\) belongs to a unique cycle and there are disjoint cycles~\({\cal C}_i,1 \leq i \leq L\) such that every state~\(\mathbf{s} \in {\cal S}\) belongs to exactly one of the cycles in~\(\{{\cal C}_i\}_{1 \leq i \leq L}\) (see also Lemma~\ref{lem_cyc}~\((ii)\) of Section~\ref{pf_thm1}). Therefore if~\(\mathbf{s}  = \mathbf{s}_0\) belongs to the cycle~\({\cal C}_i\) containing~\(k\) states, then the resulting sequence of states~\(\mathbf{s}_0 \rightarrow \mathbf{s}_1 \rightarrow \mathbf{s}_2 \rightarrow \cdots\) obtained by applying the recursion~(\ref{an_rec}) is periodic and~\(\mathbf{s}_i = \mathbf{s}_{i+k}\) for all~\(i \geq 0\) (Lemma~\ref{lem_cyc}~\((ii),\) Section~\ref{pf_thm1}).

Due to the periodicity of the state sequence~\(\{\mathbf{s}_i\}_{i \geq 0},\) repetitions occur in the symbol sequence~\(A(\mathbf{s}_0) := \{a_n\}_{n \geq 0}\) as well and so we define the period~\(P(\mathbf{s}_0)\) of the sequence~\(A(\mathbf{s}_0)\) to be the smallest~\(p\) such that there exists an integer~\(l = l(p)\) satisfying~\(a_n = a_{n+p}\) for all~\(n \geq l\) (Lidl and Niederreiter (1986)). Recalling that~\(\mathbf{s}_0\) belongs to the cycle~\({\cal C}_i\) containing~\(k\) states, the period of the symbol sequence~\(A(\mathbf{s}_0)\) must therefore also be~\(k\) (see Lemma~\ref{lem_cyc}~\((iii),\) Section~\ref{pf_thm1}).

Let~\({\cal P}\) be the set of all possible periods of sequences with initial states in~\({\cal S}\) and satisfying recursion~(\ref{an_rec}). The set~\({\cal P}\) is defined to be the \emph{period set} and based on the discussion in the previous paragraphs, we see that
\begin{equation}\label{p_def}
{\cal P} := \{\#{\cal C}_i: 1 \leq i \leq L\}
\end{equation}
where~\(\#{\cal C}_i\) denotes the number of states in the cycle~\({\cal C}_i.\) The zero state~\(\mathbf{0}\) always belongs to the cycle~\(\mathbf{0} \rightarrow \mathbf{0}\) and so~\(1 \in {\cal P}.\) Clearly, the structure of the period set~\({\cal P}\) depends on the coefficients~\(\{c_i\}_{1 \leq i \leq r}\) in the recursion~(\ref{an_rec}). The main question we are interested in is this: What restrictions must be placed on~\(\{c_i\}_{1 \leq i \leq r}\) so that~\({\cal P}\) contains exactly two elements ?

For convenience, we state our results in terms of the \emph{characteristic polynomial} (Golomb (1967)) 
\begin{equation}\label{char_f}
f(x) := 1-\sum_{i=1}^{r} c_i x^{i}.
\end{equation}
Throughout all polynomials have coefficients in~\(GF(q).\)

We have the following definition.
\begin{Definition}
For integer~\(T \geq 2\) we say that the characteristic polynomial~\(f\) is \emph{biperiodic} with \emph{biperiod}~\(T\) if the period set~\({\cal P} = \{1,T\}.\)
\end{Definition}
From~(\ref{p_def}) we see that~\(f\) is biperiodic with biperiod~\(T\) if and only if for each~\(1 \leq i \leq L,\) the cycle~\({\cal C}_i\) has either~\(T\) states or a single state.

To state our main result describing characteristic polynomials with a given biperiod~\(T,\) we begin with a couple of definitions. For integers~\(a,b \geq 1\) we use the notation~\(a \mid b\) to denote that~\(a\) divides~\(b\) and 
let~\(\gcd(a,b)\) denote the greatest common divisor of~\(a\) and~\(b.\) We say that~\(a\) and~\(b\) are relatively prime if~\(\gcd(a,b) = 1\) and let~\(\varphi(a)\) be the Euler function counting the number of positive integers less or equal to~\(a\) that are relatively prime to~\(a.\)

We say that a polynomial~\(g(x)\) is \emph{irreducible} if~\(g\) cannot be written as a product of two polynomials, each with degree strictly larger than one. The \emph{order}~\(\theta(g)\) of~\(g(x)\) is the smallest integer~\(j \geq 1\) such that~\(g(x)\) divides~\(1-x^{j}\) but does not divide~\(1-x^{i}\) for any integer~\(1 \leq i < j\) (Lidl and Niederreiter (1986)). Letting~\(q\) be a power of a prime~\(p\) we
consider two cases below depending on whether the desired biperiod~\(T\) is relatively prime to~\(q\) or not.
\begin{Theorem}\label{uni_per_thm} Let~\(q\) be a power of a prime number and let~\(T \geq 1\) be an integer.
\begin{enumerate}[label=(\roman*)]
\item{Suppose~\(\gcd(T,q) = 1.\) There exists a polynomial~\(f\) with degree~\(r\) and biperiod~\(T\) if and only if
\begin{equation}\label{uni_cond}
T \mid (q^{r-d_0} -1) \text{ and } \varphi(T) \geq r-d_0 \text{ for some } d_0 \in \{0,1\}.
\end{equation}

The condition~(\ref{uni_cond}) holds either for~\(d_0=0\) or~\(d_0 = 1\) but not both and if~(\ref{uni_cond}) holds, then the polynomial~\(f\) is of the form~\[f(x) = (1-x)^{d_0} \cdot g_1(x) \cdot g_2(x) \cdots g_l(x),\]
where~\(l\) is the largest divisor~\(j\) of~\(r-d_0\) such that~\(T \mid (q^{\frac{r-d_0}{j}}-1)\) and the~\(\{g_i\}_{1 \leq i \leq l}\) are distinct irreducible polynomials, each with degree~\(\frac{r-d_0}{l}\) and order~\(T\) and satisfying~\(\prod_{j=1}^{l} g_j(0) = 1.\) Moreover letting~\(N := \frac{q^{r}-q^{d_0}}{T},\) there are~\(L := N+q^{d_0}\) cycles~\(\{{\cal C}_i\}_{1 \leq i \leq L}\) with~\(N\) of the cycles having~\(T\) states each and the remaining~\(q^{d_0}\) cycles having one state each.}

\item{Suppose~\(T = p^{u} \cdot M\) with~\(\gcd(M,q) = 1\) and~\(u \geq 1.\) There exists a polynomial~\(f\) with biperiod~\(T\)
and degree~\(r\) if and only if
\begin{equation}\label{uni_cond2}
M = u = 1 \text{ and } r \leq p.
\end{equation}

If~(\ref{uni_cond2}) holds, then~\(f(x) = (1-x)^{r}\) and letting~\(N := \frac{q^{r}-q}{p},\) there are~\(L := N+q\) cycles~\(\{{\cal C}_i\}_{1 \leq i \leq L}\) with~\(N\) of the cycles having~\(p\) states each and the remaining~\(q\) cycles having one state each.}
\end{enumerate}
\end{Theorem}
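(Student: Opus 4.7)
The plan is to translate the problem into the classical correspondence between linear recurrences and polynomial rings. Identifying $\mathcal{S} = (GF(q))^r$ with $GF(q)[x]/(f)$ so that the state transition $\mathbf{s} \rightarrow \mathbf{s}'$ becomes multiplication by $x$, the period of a nonzero state coincides with the order $\theta(h)$ of its minimal polynomial $h$, which is necessarily a divisor of $f$. This yields the period--divisor dictionary
\[
\mathcal{P} = \{\theta(h) : h \mid f\},
\]
and since $\theta(h)=1$ exactly when $h \mid (1-x)$, biperiodicity with biperiod $T$ reduces to the purely algebraic condition that every divisor $h$ of $f$ satisfy $\theta(h) \in \{1,T\}$ with $\theta(f)=T$.

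For part $(i)$, where $\gcd(T,q)=1$, I would first argue that $f$ must be squarefree: any repeated irreducible factor $g$ would yield $\theta(g^2) = p \cdot \theta(g)$ which is divisible by $p$, incompatible with $\theta(g^2) \in \{1,T\}$ since $T$ is coprime to $p$. Hence $f = (1-x)^{d_0} g_1 \cdots g_l$ with $d_0 \in \{0,1\}$ and the $g_i$ distinct irreducibles different from $1-x$, and the lcm rule for coprime products shows biperiodicity is equivalent to every $g_i$ having order exactly $T$. The classical fact that a monic irreducible of order $T$ over $GF(q)$ has degree equal to the multiplicative order $d := \mathrm{ord}_T(q)$ forces $d \mid (r-d_0)$, i.e.\ $T \mid q^{r-d_0}-1$, and the enumeration formula (there are exactly $\varphi(T)/d$ such polynomials) gives $l \leq \varphi(T)/d$, i.e.\ $\varphi(T) \geq r-d_0$. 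The normalization $\prod g_j(0)=1$ is forced by $f(0)=1$. Conversely, once these conditions hold one assembles $f$ by selecting $l$ distinct irreducibles of order $T$, and the Chinese remainder decomposition of $GF(q)[x]/(f)$ yields the cycle count: multiplication by $x$ has order $T$ on every nonzero orbit outside the $q^{d_0}$ fixed constants, producing $N = (q^r - q^{d_0})/T$ orbits of length $T$.

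For part $(ii)$, where $p \mid T$ with $T = p^u M$ and $\gcd(M,q)=1$, the same divisor calculus forces a much more restrictive structure. If $g$ were an irreducible factor of $f$ distinct from $1-x$, then $\theta(g)$ divides $q^{\deg g}-1$ and is therefore coprime to $p$, so $\theta(g) \neq T$; biperiodicity would then demand $\theta(g)=1$, forcing $g = 1-x$, a contradiction. Hence $f = (1-x)^r$. Since $\theta((1-x)^e) = p^{\lceil \log_p e\rceil}$, the orders of the divisors of $(1-x)^r$ run through $\{1, p, p^2, \ldots, p^{\lceil\log_p r\rceil}\}$, and requiring this set to collapse to $\{1,T\}$ forces $\lceil\log_p r\rceil = 1$ and $T = p$, i.e.\ $u = M = 1$ and $r \leq p$. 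The cycle count follows from an explicit orbit analysis on $GF(q)[x]/((1-x)^r)$: the $q$ constant states are fixed and the remaining $q^r-q$ states partition into $(q^r-q)/p$ orbits of length $p$.

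The main technical obstacle is establishing the period--divisor dictionary $\mathcal{P} = \{\theta(h) : h \mid f\}$ rigorously in full generality, particularly when $f$ is non-squarefree, since this underlies every subsequent step. Once available, the remaining analysis splits cleanly into the enumeration of irreducibles of prescribed order (part $(i)$) and a short $p$-adic filtration argument on $GF(q)[x]/((1-x)^r)$ (part $(ii)$).
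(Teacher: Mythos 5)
Your proposal is correct, and its overall skeleton matches the paper's: both rest on the dictionary \(\mathcal{P}=\{\theta(h):h\mid f\}\), the fact that \(\theta(g^{b})=\theta(g)\,p^{t}\) with \(t\) minimal such that \(p^{t}\geq b\), and the count \(\varphi(T)/m\) of monic irreducibles of order \(T\) and degree \(m=\mathrm{ord}_T(q)\) (the paper's Lemma~3, quoted from Lidl--Niederreiter Theorems 3.8 and 3.5); from there your factorization analysis in parts \((i)\) and \((ii)\), including the elimination of repeated factors, the forcing of \(f=(1-x)^r\) when \(p\mid T\), and both cycle counts, is essentially identical to the paper's. Where you genuinely diverge is in how the dictionary itself is established: you identify \(\mathcal{S}\) with the module \(GF(q)[x]/(f)\) and read off periods from annihilators/minimal polynomials (the annihilator of a state \(s\) is \(f/\gcd(s,f)\), and every divisor of \(f\) arises this way), whereas the paper proves it via generating functions, writing \(G(x)=D(x;\mathbf{s})/f(x)=a(x)/(1-x^{W})\) and explicitly solving backwards for an initial state realizing \(D(x;\mathbf{s})=h(x)\) for any cofactor \(h\) of a given divisor \(g\). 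The paper itself remarks that the minimal-polynomial route (Lidl--Niederreiter, Chapter~6) is an alternative, so your approach is legitimate and arguably cleaner for the non-squarefree case you flag as the main obstacle; the module viewpoint also makes the fixed-point count \(q^{d_0}\) immediate via the kernel of multiplication by \(x-1\), where the paper argues directly with \(f(1)\). Two small points to watch if you write this up: with the paper's normalization \(f(0)=1\), the state transition corresponds to multiplication by \(x^{-1}\) (not \(x\)) modulo \(f\), which is harmless since \(x\) is invertible mod \(f\) and orbit lengths agree, but should be said; and in the converse direction of \((i)\) you need a scalar adjustment so that the assembled product has constant term \(1\) (the paper's factor \(\alpha\) in its equation (3.2)), since distinct monic irreducibles of order \(T\) need not multiply to something with \(f(0)=1\) on the nose.
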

As a consequence of Theorem~\ref{uni_per_thm}, a polynomial~\(f\) with degree~\(r \geq 2\) has biperiod~\(q^{r-1}-1\) if and only if~\(f(x)=(1-x)\cdot g(x),\) where~\(g(x)\) is a primitive polynomial of degree~\(r-1\) (i.e.\ the order of~\(g(x)\) is~\(q^{r-1}-1\)) and satisfies~\(g(0)=1.\) In this case, there are exactly~\(2q\) cycles:~\(q\) cycles  each containing~\(q^{r-1}~-~1\) states each and~\(q\) cycles containing one state each. In particular, for~\(q=2\) this characterises all polynomials with the maximum possible biperiod \emph{strictly less} than~\(2^{r}-1.\) Indeed if there existed a biperiod~\(T\) with value between~\(2^{r-1}\) and~\(2^{r}-2,\) then from~(\ref{uni_cond}) we have that~\(T\) must be a factor of~\(2^{r}-1\) and so we would have~\(\gamma \cdot T = 2^{r}-1\) for some integer~\( \gamma \geq 2.\) But this would mean that~\(\gamma\cdot T \geq 2\cdot T \geq 2\cdot 2^{r-1} = 2^{r},\) a contradiction.

We illustrate Theorem~\ref{uni_per_thm} with an example.
\subsection*{Example}
Let~\(q=2\) and~\(r = 4\) so that~\(2^{r}-1 = 15\) and~\(2^{r-1}-1 = 7.\) From condition~(\ref{uni_cond}) we therefore see that the desired biperiod~\(T\) must either be a factor of~\(15\) and satisfy~\(\varphi(T) \geq 4\) or must satisfy~\(T = 7\) and~\(\varphi(T) \geq 3.\) We first consider the case that~\(T\) is a factor of~\(15.\) The non-trivial factors of~\(15\) are~\(3,5\) and~\(15\) and we have~\(\varphi(3) = 2 < r, \varphi(5)  =4 = r\) and~\(\varphi(15) = 2\cdot 4 > r.\) Thus there exists degree~\(4\) polynomials with biperiod~\(5\) and~\(15\) and there does not exist a degree~\(4\) polynomial with biperiod~\(3.\)  In what follows, we give one example of a polynomial in each of the above mentioned valid cases and also describe the corresponding cycle structure of the states.

For~\(T = 5,\) the largest divisor~\(j\) of~\(4\) such that~\(T  \mid (2^{\frac{4}{j}}-1)\) is~\(j=1\) and so any irreducible polynomial of degree~\(4\) and order~\(5\) also has biperiod~\(5.\)  As an example, the polynomial~\(f_1(x) = x^{4} + x^{3}  + x^2 + x + 1\) is irreducible with degree~\(4\) and order~\(5\) and therefore has biperiod~\(5.\) Correspondingly, there are~\(3\) cycles containing~\(5\) states each and one cycle containing one state as enumerated below (states are of the form~\((a_{j-r},\ldots,a_{j-1})\)):\\
\({\cal C}_1 : 0001 \rightarrow 0011 \rightarrow 0110 \rightarrow 1100 \rightarrow 1000 \rightarrow 0001.\)\\
\({\cal C}_2 : 0111 \rightarrow 1111 \rightarrow 1110 \rightarrow 1101 \rightarrow 1011 \rightarrow 0111.\)\\
\({\cal C}_3 : 1001 \rightarrow 0010 \rightarrow 0101 \rightarrow 1010 \rightarrow 0100 \rightarrow 1001.\)\\
\({\cal C}_4 : 0000 \rightarrow 0000.\)

For~\(T = 15,\) any polynomial with degree~\(4\) and order~\(15\) also has biperiod~\(15.\) For illustration, the polynomial~\(f_2(x) = x^{4} + x^{3} + 1\) is irreducible and has order~\(15\) and so correspondingly, there is one cycle containing~\(15\) states and one cycle containing one state as enumerated below (states are of the form~\((a_{j-r},\ldots,a_{j-1})\)):\\
\({\cal C}_1 : 0001 \rightarrow 0010 \rightarrow 0100 \rightarrow 1001 \rightarrow 0011 \rightarrow 0110 \rightarrow 1101 \rightarrow 1010 \rightarrow 0101 \rightarrow 1011 \rightarrow 0111 \rightarrow 1111 \rightarrow 1110 \rightarrow 1100 \rightarrow 1000 \rightarrow 0001.\)\\
\({\cal C}_2 : 0000 \rightarrow 0000.\)

We now study the case~\(T = 7.\) Since~\(\varphi(7) = 6 \geq 3 = r-1,\) there exists a polynomial~\(f\) with biperiod~\(7\) and degree~\(4\) and since~\(r-1 = 3\) is prime, the largest divisor~\(j\) of~\(r-1\) such that~\(7 \mid (2^{\frac{r-1}{j}}-1)\) is one. From Theorem~\ref{uni_per_thm}, we therefore get that~\(f\) is of  the form~\((1+x)\cdot g(x),\) where~\(g(x)\) is irreducible with degree~\(3\) and order~\(7.\) For example, the polynomial~\[f_3(x) = x^{4} + x^{3}  +x^2 + 1 = (x^3+x+1) (x+1)\] has an irreducible factor~\(x^{3}+x+1\) of order~\(7.\) Correspondingly, there are two cycles containing~\(7\) states each and two cycles containing one state each as enumerated below (states are of the form~\((a_{j-r},\ldots,a_{j-1})\)):\\
\({\cal C}_1 : 0001 \rightarrow 0010 \rightarrow 0101 \rightarrow 1011 \rightarrow 0110 \rightarrow 1100 \rightarrow 1000 \rightarrow 0001.\)\\
\({\cal C}_2 : 0111 \rightarrow 1110 \rightarrow 1101 \rightarrow 1010 \rightarrow 0100 \rightarrow 1001 \rightarrow 0011 \rightarrow 0111.\)\\
\({\cal C}_3 : 1111 \rightarrow 1111.\)\\
\({\cal C}_4 : 0000 \rightarrow 0000.\)


The paper is organized as follows. In Section~\ref{pf_thm1}, we collect preliminary results used in the proof of Theorem~\ref{uni_per_thm} and in Section~\ref{pf_uni}, we prove Theorem~\ref{uni_per_thm}.

\renewcommand{\theequation}{\thesection.\arabic{equation}}
\setcounter{equation}{0}
\section{Preliminary results} \label{pf_thm1}
To prove Theorem~\ref{thm1} we collect a few preliminary results in the form of three Lemmas. For completeness we provide small proofs.
We recall that~\({\cal S} = (GF(q))^r\) is the state space consisting of 
all~\(r\)-tuples with entries in~\(GF(q).\)
\begin{Lemma}\label{lem_cyc} Let~\(\{c_i\}_{1 \leq  i \leq r}\) be as in the recursion~(\ref{an_rec}). The following properties hold:
\begin{enumerate}[label=(\roman*)]
\item{For every~\(\mathbf{y} \in {\cal S},\) there exists unique~\(\mathbf{z}_1,\mathbf{z}_2 \in {\cal S}\) such that~\(\mathbf{z}_1 \rightarrow \mathbf{y} \rightarrow \mathbf{z}_2.\)}
\item{There are cycles~\({\cal C}_i,1 \leq i \leq L\) such that every state~\(\mathbf{s} \in {\cal S}\) belongs to exactly one of the cycles in~\(\{{\cal C}_i\}_{1 \leq i \leq L}.\)}
\item{Pick a cycle~\({\cal C}_i, 1 \leq i \leq L\) with~\(p_i\) states and let~\(\{a_n\}_{n \geq 0}\) be the sequence generated with initial state~\(\mathbf{s} \in {\cal C}_i.\) The period~\(P(\mathbf{s})\) of the sequence~\(\{a_n\}_{n \geq 0}\) equals~\(p_i,\) the terms~\(a_n = a_{n+p_i}\) for all~\(n \geq 0\) and the generating function
\begin{equation}\label{gx_def}
G(x) = G(x;\mathbf{s}) := \sum_{n \geq 0} a_n x^{n} = \frac{D(x;\mathbf{s})}{f(x)} = \frac{a(x)}{1-x^{p_i}}
\end{equation}
where~\(f(x) = 1-\sum_{j=1}^{r} c_jx^{j}\) is the characteristic polynomial as defined in~(\ref{char_f}),
\begin{equation}\label{dx_def}
D(x;\mathbf{s}) = \sum_{j=0}^{r-1}d_jx^{j} \text{ with } d_j = \sum_{i=1}^{r-j} a_{-i}c_i, 0 \leq j \leq r-1.
\end{equation}
and~\(a(x) := \sum_{n=0}^{p_i-1}a_n x^{n}.\)}
\end{enumerate}
\end{Lemma}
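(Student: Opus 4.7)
My plan for part (i) is to observe that the successor $\mathbf{z}_2$ is forced by the definition of the ``leads to'' relation together with the recursion (\ref{an_rec}), so the only real content is uniqueness of the predecessor $\mathbf{z}_1$. Writing $\mathbf{y} = (a_{-r}, \ldots, a_{-1})$ and $\mathbf{z}_1 = (b, a_{-r}, \ldots, a_{-2})$ (the trailing entries being forced by what $\mathbf{z}_1 \rightarrow \mathbf{y}$ means), the condition $\mathbf{z}_1 \rightarrow \mathbf{y}$ reduces to the single linear equation $a_{-1} = c_1 a_{-2} + c_2 a_{-3} + \cdots + c_{r-1} a_{-r} + c_r b$ in the unknown $b$, which has a unique solution because $c_r \neq 0$.

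For (ii) I would iterate the successor map from any $\mathbf{s}_0 \in {\cal S}$ and use finiteness of ${\cal S}$ to pick the least $j$ such that $\mathbf{s}_j = \mathbf{s}_i$ for some $i < j$. If $i > 0$, then $\mathbf{s}_{i-1}$ and $\mathbf{s}_{j-1}$ would be two distinct predecessors of $\mathbf{s}_i$, contradicting (i); hence $i = 0$, so the orbit of $\mathbf{s}_0$ is a cycle. Two such cycles that share a state must coincide because both the forward and backward orbits of any state are uniquely determined, which shows that ${\cal S}$ partitions into disjoint cycles $\{{\cal C}_i\}_{1 \leq i \leq L}$.

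For the first two claims in (iii), I would set $\mathbf{s}_n = (a_{n-r}, \ldots, a_{n-1})$ for $n \geq 0$; since $\mathbf{s} \in {\cal C}_i$ and ${\cal C}_i$ has $p_i$ distinct states, the state sequence is purely periodic with minimal period $p_i$, and reading the last coordinate of $\mathbf{s}_{n} = \mathbf{s}_{n + p_i}$ yields $a_n = a_{n + p_i}$ for all $n \geq 0$, so $P(\mathbf{s}) \leq p_i$. Conversely, any period $q$ of the symbol sequence (valid eventually) would, after at most $r$ shifts, force $\mathbf{s}_n = \mathbf{s}_{n+q}$ for all large $n$; purely periodic sequences have no proper eventual period, so $p_i \mid q$ and hence $q = p_i$.

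The generating function identity is where I expect the main technical effort. I would multiply $G(x)$ by $f(x)$ formally and collect powers of $x$: the recursion (\ref{an_rec}) makes the coefficient of $x^m$ in $f(x)G(x)$ vanish for every $m \geq r$, so $f(x)G(x)$ equals a polynomial $D(x;\mathbf{s})$ of degree at most $r-1$. The hard bookkeeping step is to re-index the surviving coefficients and apply the recursion one more time to re-express $a_m$ for $0 \leq m \leq r-1$ purely in terms of the initial data $(a_{-r}, \ldots, a_{-1})$; this should reproduce the stated formula $d_j = \sum_{i=1}^{r-j} a_{-i} c_i$. The second equality is then immediate from the periodicity $a_n = a_{n + p_i}$ established above: splitting the sum according to $n \equiv j \pmod{p_i}$ gives $G(x) = \sum_{k \geq 0} x^{k p_i} \sum_{j=0}^{p_i - 1} a_j x^j = a(x)/(1 - x^{p_i})$.
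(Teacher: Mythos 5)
Your proposal is correct and follows essentially the same route as the paper on all three parts: the explicit solve for the predecessor using $c_r \neq 0$ in (i), the first-repetition argument combined with uniqueness of predecessors in (ii), and in (iii) the sliding-window equivalence between state periodicity and symbol periodicity together with the observation that $f(x)G(x)$ is a polynomial of degree at most $r-1$ (the paper simply cites Golomb for this last computation, exactly the step you defer). One caveat on that deferred bookkeeping: carrying it out actually yields $d_j = \sum_{i=1}^{r-j} c_{i+j}\, a_{-i}$, which is \emph{not} the formula $d_j = \sum_{i=1}^{r-j} a_{-i} c_i$ printed in~(\ref{dx_def}); for instance with $r=2$ the coefficient of $x^1$ in $f(x)G(x)$ is $a_1 - c_1 a_0 = c_2 a_{-1}$, not $c_1 a_{-1}$. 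The printed formula appears to be a typo in the paper --- the corrected version is the one implicitly used in the proof of Lemma~\ref{thm1}, where the backward substitution starts from $a_{-1} = c_r^{-1} h_{r-1}$, i.e.\ $d_{r-1} = c_r a_{-1}$ --- so your method is sound, but your assertion that the re-indexing ``should reproduce the stated formula'' would not literally check out, and had you carried out the computation you would have needed to flag this discrepancy rather than match the target as written.
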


\begin{proof}[Proof of Lemma~\ref{lem_cyc}]
Property~\((i)\) is true since if~\(\mathbf{y} =(y_1,\ldots,y_r) \in {\cal S}\)  then~\(\mathbf{z}_1 \rightarrow \mathbf{y} \rightarrow \mathbf{z}_2\) where~\(\mathbf{z}_1 = (b,y_1,\ldots,y_{r-1})\) with~\[b := c_r^{-1} \left(y_r - \sum_{i=2}^{r} c_{r-i} y_{i-1}\right)\] and~\(\mathbf{z}_2 = (y_2,\ldots,y_{r},d)\) with~\(d := \sum_{i=1}^{r} c_i y_{r-i}.\)

To prove~\((ii),\) let~\(\mathbf{s}_0 := \mathbf{s}\) be any state. From property~\((i)\) there exists a unique state~\(\mathbf{s}_1\) such that~\(\mathbf{s}_0 \rightarrow \mathbf{s}_1.\) Similarly, there exists a unique state~\(\mathbf{s}_2\) such that~\(\mathbf{s}_1 \rightarrow \mathbf{s}_2.\) Because the number of states in~\({\cal S}\) is finite, we eventually get that~\(\mathbf{s}_j \rightarrow \mathbf{s}_i\) for some integers~\(j \geq 1\) and~\(0 \leq i \leq j-1.\) The integer~\(i =0\) necessarily else we would have~\(\mathbf{s}_{i-1} \rightarrow \mathbf{s}_i\) in addition to~\(\mathbf{s}_j \rightarrow \mathbf{s}_i,\) a contradiction to~\((i).\) By the same argument we get that if~\({\cal C}(\mathbf{s})\) is the cycle containing the state~\(\mathbf{s},\) then for any two distinct states~\(\mathbf{s}_1\) and~\(\mathbf{s}_2\) the corresponding cycles~\({\cal C}(\mathbf{s}_1)\) and~\({\cal C}(\mathbf{s}_2)\) satisfy the following property: Either~\({\cal C}(\mathbf{s}_1) = {\cal C}(\mathbf{s}_2)\) or~\({\cal C}(\mathbf{s}_1) \cap {\cal C}(\mathbf{s}_2) = \emptyset.\)  The distinct cycles in~\(\{{\cal C}(\mathbf{s})\}_{\mathbf{s} \in {\cal S}}\) form the desired set~\({\cal C}_1,\ldots, {\cal C}_L,\) proving~\((ii).\)

To prove~\((iii),\) let~\(\mathbf{s}_0 \rightarrow \mathbf{s}_1 \rightarrow \ldots\)  be the sequence of states generated starting with~\(\mathbf{s}_0 \in {\cal C}_i.\) Recalling that~\(\mathbf{s}_j := (a_{j-r},\ldots,a_{j-1},a_j) = \mathbf{s}_{j+p_i}\) for all~\(j \geq 0,\) we get that~\(a_n = a_{n+p_i}\) for all~\(n \geq 0.\) Thus~\(P(\mathbf{s}) \leq p_i.\) If it so happens that~\(P =  P(\mathbf{s}) \leq p_i-1\) so that~\(a_j = a_{j+P}\) for all~\(j\) sufficiently large, then there exists~\(J\) sufficiently large so that~\(\mathbf{s}_j = \mathbf{s}_{j+P}\) for all~\(j \geq J.\) This implies that~\(\mathbf{s}_j \in {\cal C}_i\) belongs to a cycle of length at most~\(P \leq p_i-1,\) a contradiction. Finally, we substitute the recursion~(\ref{an_rec}) into the definition of~\(G(x)\) in~(\ref{gx_def}) to get the second relation in~(\ref{gx_def}) (see Golomb (1967)). Writing~\[(1-x^{p_i})\cdot G(x) = a(x) + \sum_{n=0}^{\infty} (a_{n+p_i} -a_n) x^{n+p_i}\] and using the fact that the sequence~\(\{a_n\}_{n \geq 0}\) also has period~\(p_i,\) we get the final relation in~(\ref{gx_def}).~\(\qed\)
\end{proof}

The next Lemma relates the period set~\({\cal P}\) to the orders of factors of the characteristic polynomial~\(f(x).\)
\begin{Lemma}\label{thm1}  Let~\(\{c_i\}_{1 \leq  i \leq r}\) be as in the recursion~(\ref{an_rec}) and let~\({\cal C}_i,1 \leq i \leq L\) be the cycle decomposition of the state space~\({\cal S}\) as in Lemma~\ref{lem_cyc}. If~\(p_i\) denotes the number of states in the cycle~\({\cal C}_i,\) then the period set
\begin{equation}\label{per_set_eq}
{\cal P} = {\cal P}(f) := \{p_1,\ldots, p_L\} =  \{\theta(g): g \mid f\}
\end{equation}
is precisely the set of orders of all possible factors of~\(f.\)
\end{Lemma}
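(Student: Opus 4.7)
I would use the generating-function identity from Lemma~\ref{lem_cyc}(iii) to translate cycle lengths into orders of divisors of~\(f.\) Two preliminary observations set this up: first, the map~\(\mathbf{s}\mapsto D(x;\mathbf{s})\) defined in~(\ref{dx_def}) is a linear bijection from~\({\cal S}\) onto the space of polynomials of degree~\(<r,\) because the system expressing~\((d_0,\ldots,d_{r-1})\) in terms of~\((a_{-1},\ldots,a_{-r})\) is triangular with nonzero diagonal~\(c_r;\) second, every divisor~\(g\mid f\) satisfies~\(g(0)\neq 0\) (since~\(f(0)=1\)), so~\(\theta(g)\) is well-defined.

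For the inclusion~\({\cal P}\subseteq\{\theta(g):g\mid f\},\) I would fix a cycle~\({\cal C}_i\) of length~\(p_i\) and an initial state~\(\mathbf{s}\in{\cal C}_i.\) By Lemma~\ref{lem_cyc}(iii), \(G(x)=D(x;\mathbf{s})/f(x)=a(x)/(1-x^{p_i}).\) Reducing the left-hand fraction to lowest terms, write~\(h=\gcd(D,f),\) \(D=hD_1,\) \(f=hf_1\) with~\(\gcd(D_1,f_1)=1.\) Cross-multiplication gives~\(a(x)f_1=D_1(1-x^{p_i}),\) so by coprimality~\(f_1\mid 1-x^{p_i}\) and hence~\(\theta(f_1)\mid p_i.\) Conversely, since~\(f_1\mid 1-x^{\theta(f_1)},\) the product~\((1-x^{\theta(f_1)})G(x)\) is a polynomial of degree~\(<\theta(f_1),\) so~\(a_n=a_{n+\theta(f_1)}\) for all~\(n\geq 0;\) this forces~\(\mathbf{s}_n=\mathbf{s}_{n+\theta(f_1)}\) and hence~\(p_i\mid\theta(f_1)\) by the minimality characterisation of the cycle length. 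Therefore~\(p_i=\theta(f_1)\) with~\(f_1\mid f.\)

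For the reverse inclusion, take any divisor~\(g\mid f.\) When~\(\deg g=0\) we have~\(\theta(g)=1\in{\cal P}\) via the zero-state fixed point. Otherwise~\(f=g\cdot h\) with~\(\deg h<r,\) and the bijection above picks out a unique~\(\mathbf{s}\) with~\(D(x;\mathbf{s})=h(x).\) Then~\(G(x)=h/f=1/g,\) and applying the reduction-to-lowest-terms argument of the previous paragraph (now with~\(D_1=1\) and~\(f_1=g\)) shows the cycle through~\(\mathbf{s}\) has length exactly~\(\theta(g);\) in particular~\(\theta(g)\in{\cal P}.\) Combining both inclusions yields~(\ref{per_set_eq}).

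I expect the main technical point to be identifying the cycle length with the order of the reduced denominator~\(f_1:\) one must combine the easy divisibility direction (from~\((1-x^{p_i})G\) being polynomial) with a careful use of the minimality characterisation of~\(p_i\) in Lemma~\ref{lem_cyc}(iii), and implicitly use~\(c_r\neq 0\) both to guarantee pure periodicity of the state sequence and to secure the initial-state to~\(D\)-polynomial bijection. Everything else is routine manipulation of rational generating functions over~\(GF(q).\)
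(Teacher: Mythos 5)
Your proposal is correct and follows essentially the same route as the paper's own proof: both directions rest on the generating-function identity of Lemma~\ref{lem_cyc}\((iii),\) with the forward inclusion obtained by reducing~\(D(x;\mathbf{s})/f(x)\) to lowest terms and cross-multiplying against~\(a(x)/(1-x^{p_i})\) to force the reduced denominator to divide~\(1-x^{p_i},\) and the reverse inclusion obtained by solving the triangular system (nonzero diagonal~\(c_r\)) to pick an initial state with~\(D(x;\mathbf{s}) = h(x)\) so that~\(G(x) = 1/g(x).\) The only cosmetic differences are that you phrase the two comparisons as the divisibilities~\(\theta(f_1) \mid p_i\) and~\(p_i \mid \theta(f_1)\) where the paper uses the inequalities~\(\theta \leq W\) and~\(W \leq \theta,\) and that you state the state-to-\(D\)-polynomial correspondence as an explicit bijection where the paper performs the same backward substitution inline.
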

We recall that the state~\(\mathbf{0}\) always belongs to the cycle~\(\mathbf{0} \rightarrow  \mathbf{0}\) and so~\(1 \in {\cal P}.\) The rest of the period set is determined from the non-trivial factors of the characteristic polynomial~\(f.\)

As an example, we let~\(q=2\) and consider the characteristic polynomial~\(f_0(x) = 1+x^2 + x^4 = (1+x+x^2)^2.\) The order of the factor~\(1+x+x^2\) of~\(f_0\) is~\(3\) and the order of~\(f_0\) is~\(6\) and so the period set for the sequences generated by~\(f_0\) is~\({\cal P} = \{1,3,6\}.\) Indeed, the degree of~\(f_0\) is~\(4\) and the sixteen states in~\(\{0,1\}^{4}\) are present in the following four cycles where each state~\(\mathbf{s}\) is of the form~\((a_{j-r},a_{j-r+1},\ldots,a_{j-1}):\)\\
\({\cal C}_1 : 0001 \rightarrow 0010 \rightarrow 0101 \rightarrow 1010 \rightarrow 0100 \rightarrow 1000 \rightarrow 0001.\)\\
\({\cal C}_2 : 0011 \rightarrow 0111 \rightarrow 1111 \rightarrow 1110 \rightarrow 1100 \rightarrow 1001 \rightarrow 0011.\)\\
\({\cal C}_3 : 0110 \rightarrow 1101 \rightarrow 1011 \rightarrow 0110.\)\\
\({\cal C}_4 : 0000 \rightarrow 0000.\)

The period set in Lemma~\ref{thm1} could also be studied using minimal polynomials (see for example, Chapter~\(6\) of Lidl and Niedereitter (1986)). For completeness, we give a small proof.

\begin{proof}[Proof of Lemma~\ref{thm1}]: Let~\({\cal P} = \{p_1,\ldots,p_L\}\) be the period set as defined in~(\ref{per_set_eq}) where~\(p_i\) denotes the number of states in the cycle~\({\cal C}_i\) and let~\({\cal Q} := \{\theta(g) : g \mid f\}\) be the set of orders of all possible factors of the characteristic polynomial~\(f.\) We first see that~\({\cal P} \subseteq {\cal Q}.\)  Let~\(\mathbf{s} = (a_{-r},\ldots,a_{-1})\) be any state in~\({\cal S}\) and let~\(\{a_n\}_{n \geq 0}\) be the corresponding sequence generated using~(\ref{an_rec}). If~\(G(x)\) is the generating function of~\(\{a_n\}_{n\geq 0}\) then cancelling the common factors in the second relation of~(\ref{gx_def}) and using the final relation of~(\ref{gx_def}), we get that~\(G(x) = \frac{u(x)}{b(x)} = \frac{a(x)}{1-x^{W}}\) where~\(b(x)\) is a factor of~\(f(x)\) and~\(W\) is the period of~\(\{a_n\}_{n \geq 0}.\)
Thus~\((1-x^{W})\cdot u(x) = b(x) \cdot a(x)\) and since~\(u(x)\) and~\(b(x)\) have no factors in common,~\(b(x)\) divides~\(1-x^{W}.\) Consequently, the order~\(\theta\) of~\(b(x)\) is at most~\(W.\) Next using the fact that~\[(1-x^{\theta})\cdot G(x) = \sum_{n=0}^{\theta-1}a_nx^{n} +\sum_{n=\theta}^{\infty}(a_{n}-a_{n-\theta}) x^{n}\] has finite degree, we get that~\(a_n = a_{n-\theta}\) for all~\(n\) sufficiently large. Thus the period~\(W\) of~\(\{a_n\}_{n \geq 0}\) is at most~\(\theta.\) Thus~\(W = \theta\) and since~\(W\) is also the number of states in the cycle~\({\cal C}(\mathbf{s})\) (see Lemma~\ref{lem_cyc}), this completes the proof of~\({\cal P} \subseteq {\cal Q}.\)

To see that~\({\cal Q} \subseteq {\cal P},\) let~\(g\) be a factor of~\(f\) so that~\(g(x)\cdot h(x) = f(x)\) for some polynomial~\(h(x) = \sum_{j=0}^{r-1}h_jx^{j}\) of degree at most~\(r-1.\) We first see that choosing~\(\mathbf{s} = (a_{-r},\ldots,a_{-1})\) appropriately, the polynomial~\(D(x;\mathbf{s})\) defined in~(\ref{gx_def}) equals~\(h(x).\)
Indeed we solve~(\ref{dx_def}) backwards; first we choose~\(a_{-1}\) such that~\(a_{-1} = c_r^{-1} \cdot h_{r-1}\) and having determined~\(a_{-1},\ldots,a_{-i+1},\) we set~\[a_{-i} := c_r^{-1}\left(h_{r-i}-\sum_{j=1}^{i-1}a_{-j}c_{r-j}\right).\] The generating function of the resulting sequence~\(\{a_n\}_{n \geq 0}\) then equals~\(\frac{1}{g(x)}.\)  If~\(\omega\) is the number of states in the corresponding cycle~\({\cal C}(\mathbf{s}),\) then from Lemma~\ref{lem_cyc} we get that the sequence~\(\{a_n\}_{n \geq 0}\) also has period~\(\omega\) and arguing as in the previous paragraph, we get that~\(\omega = \theta(g),\) the order of the polynomial~\(g.\)~\(\qed\)
\end{proof}

The final ingredient used in the proof of Theorem~\ref{uni_per_thm} concerns the order of powers of polynomials and number of polynomials with a given degree. We say that a polynomial~\(g(x) = \sum_{i=0}^{w} c_i x^{i}, c_w \neq 0\) is \emph{monic} if the leading coefficient~\(c_w  =1\) and have the following result.
\begin{Lemma}\label{lidl_lemma}
\begin{enumerate}[label=(\alph*)]
\item{If~\(g\) is an irreducible polynomial of order~\(a\) with~\(g(0) \neq 0,\) then for integer~\(b \geq 1\) the polynomial~\(g^{b}\) has order~\(a \cdot p^{t},\) where~\(t\) is the smallest integer such that~\(p^{t} \geq b.\)}

\item{ Let~\(e \geq 2\) be an integer satisfying~\(\gcd(e,q) = 1.\) For integer~\(m \geq 1,\) let~\(\delta(m,e)\) be the number of monic irreducible polynomials  of degree~\(m\) and order~\(e.\) We have that~\(\delta(m,e) = \frac{\varphi(e)}{m}\) if~\(m\) is the smallest integer~\(j \geq 1\) such that~\(q^{j} \equiv 1 \mod{e}.\) For all other~\(m,\) we have that~\(\delta(m,e) =0.\)}
\end{enumerate}
\end{Lemma}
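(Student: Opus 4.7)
The plan is to reduce both parts to standard facts about the factorization of $1-x^n$ over $GF(q)$, exploiting the crucial fact that in characteristic $p$ the Frobenius identity gives $(1-x^a)^{p^s} = 1-x^{ap^s}$, and that when $\gcd(n,p)=1$ the polynomial $1-x^n$ is separable.

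For part~(a), I would first show that $g^b$ divides $1-x^{ap^t}$ and then that $ap^t$ is the smallest such exponent. Since $g$ is irreducible of order $a$, we have $g\mid 1-x^a$, so $g^{p^t}\mid (1-x^a)^{p^t}=1-x^{ap^t}$; because $p^t\geq b$, already $g^b$ divides $1-x^{ap^t}$, giving $\theta(g^b)\mid ap^t$. Conversely, suppose $g^b\mid 1-x^n$ and write $n=a'p^s$ with $\gcd(a',p)=1$. Then $1-x^n=(1-x^{a'})^{p^s}$, and since $g$ is irreducible and divides this product, $g\mid 1-x^{a'}$, which forces $a\mid a'$ by minimality of the order $a$. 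Moreover, since $\gcd(a',p)=1$ the polynomial $1-x^{a'}$ is separable, so $g$ appears exactly once in its factorization into distinct irreducibles; hence $g^b\mid (1-x^{a'})^{p^s}$ requires $b\leq p^s$, i.e., $p^t\leq p^s$. Combining with $\gcd(a,p)=1$ (which follows because the roots of $g$ are $a$-th roots of unity and hence have order coprime to $p$), I conclude $ap^t\mid a'p^s = n$, so $\theta(g^b)=ap^t$.

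For part~(b), I would identify monic irreducible polynomials of order $e$ with the minimal polynomials (over $GF(q)$) of primitive $e$-th roots of unity living in the algebraic closure $\overline{GF(q)}$. Since $\gcd(e,q)=1$, there are exactly $\varphi(e)$ primitive $e$-th roots of unity. Galois theory for finite fields says that the minimal polynomial of such a root $\alpha$ has degree equal to the least $j\geq 1$ with $\alpha^{q^j}=\alpha$, i.e., with $q^j\equiv 1\pmod{e}$ (using that $\alpha$ has multiplicative order $e$). Call this least $j$ by $m_0$. Then every primitive $e$-th root sits in a Frobenius orbit of size exactly $m_0$, so if $m\neq m_0$ no such irreducible of degree $m$ exists, giving $\delta(m,e)=0$; when $m=m_0$, the $\varphi(e)$ primitive roots partition into $\varphi(e)/m$ orbits, each producing one distinct monic irreducible polynomial, yielding $\delta(m,e)=\varphi(e)/m$.

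The main subtlety I anticipate is the separability argument in part~(a): one must cleanly justify that $g$ appears with multiplicity exactly one in $1-x^{a'}$ whenever $\gcd(a',p)=1$, since this is what converts the divisibility $g^b\mid(1-x^{a'})^{p^s}$ into the inequality $b\leq p^s$. Everything else is bookkeeping around the definition of order and the Frobenius identity. Part~(b) is essentially a Galois-orbit count and should go through directly once the correspondence between irreducible polynomials of order $e$ and primitive $e$-th roots of unity is set up.
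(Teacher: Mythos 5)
Your proof is correct, but it is worth knowing that the paper does not actually prove Lemma~\ref{lidl_lemma}: it simply cites Theorems~3.8 and~3.5 of Lidl and Niederreiter (1986). What you have written is a self-contained reconstruction of the standard textbook arguments behind those citations, and it goes through. In part~(a), the two directions — \(g \mid 1-x^{a}\) gives \(g^{b} \mid g^{p^{t}} \mid (1-x^{a})^{p^{t}} = 1-x^{ap^{t}}\), and conversely writing \(n = a'p^{s}\) with \(\gcd(a',p)=1\), factoring \(1-x^{n} = (1-x^{a'})^{p^{s}}\), deducing \(a \mid a'\), and using separability of \(1-x^{a'}\) to cap the multiplicity of \(g\) at \(p^{s}\), hence \(b \leq p^{s}\) — are exactly the right mechanism; the subtlety you flagged is easily discharged, since the derivative \(-a'x^{a'-1}\) is nonzero (as \(\gcd(a',p)=1\)) and vanishes only at \(0\), which is not a root of \(1-x^{a'}\). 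The one step you should make explicit is the claim that \(g \mid 1-x^{a'}\) forces \(a \mid a'\): "minimality of the order" alone does not give divisibility, but the standard identity \(\gcd(x^{m}-1,\,x^{n}-1) = x^{\gcd(m,n)}-1\) (or the observation that the roots of \(g\) have multiplicative order exactly \(a\)) does. In part~(b), the Frobenius-orbit count on the \(\varphi(e)\) primitive \(e\)-th roots of unity, with common orbit size equal to the least \(j\) with \(q^{j} \equiv 1 \pmod{e}\), is the standard proof and is complete once one notes, as you do, the bijection between orbits and monic irreducibles of order \(e\) (both directions: minimal polynomials of primitive \(e\)-th roots have order \(e\), and every monic irreducible of order \(e\) has all its roots primitive \(e\)-th roots of unity). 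In short, your route buys self-containedness where the paper buys brevity by citation.
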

\begin{proof}[Proof of Lemma~\ref{lidl_lemma}] 
Part~\((a)\) follows from Theorem~\(3.8\) of Lidl and Niederreiter (1986) and part~\((b)\) follows from Theorem~\(3.5\) of Lidl and Niederreiter (1986).~\(\qed\)
\end{proof}

\renewcommand{\theequation}{\thesection.\arabic{equation}}
\setcounter{equation}{0}
\section{Proof of Theorem~\ref{uni_per_thm}} \label{pf_uni}
\begin{proof}[Proof of Theorem~\ref{uni_per_thm}\((i)\)] Let
\begin{equation}\label{f_type2}
f = (1-x)^{d_0} \cdot g_1^{d_1}\cdots  g_t^{d_t}
\end{equation}
be biperiodic of degree~\(r\) and biperiod~\(T\) where~\(\{g_i\}_{1 \leq i \leq t}\) are distinct irreducible polynomials having order~\(\theta_i :=\theta(g_i)  \geq 2.\) Since~\(f(0) = 1\) we must have that~\(\prod_{j=1}^{t}g_j^{d_j}(0) = 1\) and so in particular we get that~\(g_j(0) \neq 0\) for each~\(1 \leq j  \leq t.\)

From Lemma~\ref{thm1}, we know that~\(\{1,\theta_1,\ldots,\theta_t\} \subseteq {\cal P},\) the period set. But since~\(f\) is biperiodic with biperiod~\(T,\) the period set~\({\cal P} = \{1,T\}\) and so we must have that the~\(g_i\)'s all have the same order~\(T.\) Moreover, if~\(d_i \geq 2\) for some~\(i \geq 1,\) then from Lemma~\ref{lidl_lemma}\((a),\) we have that the order of~\(g_i^{d_i}\) is at least~\(T\cdot p\) and so we must have~\(d_i=1\) for all~\(1 \leq i \leq t.\) Similarly, if~\(d_0 \geq 2\) then the order of~\((1-x)^{d_0}\) is of the form~\(p^{u}\) for some integer~\(u \geq 1.\) Since~\(\gcd(T,q) = \gcd(T,p) = 1,\) we have that~\(T \neq p^{u}\) and so we must have that~\(d_0 \in \{0,1\}.\)

From the above paragraph we get that~\(f = (1-x)^{d_0} \cdot g_1\cdots  g_t\) where\\\(d_0 \in \{0,1\}\) and all the~\(g_i\) are irreducible and have the same order~\(T.\) Also since~\(f(0) =1\) we must have that~\(\prod_{j=1}^{t}g_j(0) = 1.\) For convenience we now write
\begin{equation}\label{f_type}
f = (1-x)^{d_0} \cdot \alpha \cdot h_1 \cdots h_t
\end{equation}
where~\(\alpha \in GF(q)\) and each~\(h_i\) is monic. From Lemma~\ref{lidl_lemma}\((b)\) we know that there exists a monic irreducible polynomial of order~\(T \geq 2\) and degree~\(m\) if and only if~\(m\) is the smallest integer~\(j\) such that~\(T \mid (q^{j}-1).\) For such an~\(m,\) all the polynomials~\(\{h_i\}_{1 \leq i \leq t}\) must have the same degree~\(m\) which in turn must equal~\(\frac{r-d_0}{t}.\) Thus in effect, the integer~\(m\) must satisfy two conditions:\\
\((1)\)~\(m\) is the smallest integer~\(j\) such that~\(T \mid (q^{j}-1).\)\\
\((2)\)~\(m\) is a divisor of~\(r-d_0.\)

We show that an integer~\(m\) satisfying conditions~\((1)-(2)\) above exists if and only if~\(T \mid (q^{r-d_0}-1).\) Indeed if~\(m\) satisfies~\((1)-(2),\) then~\(m\) divides~\(r-d_0\) and so~\(r-d_0 = a\cdot m\) for some integer~\(a.\) We then have that~\[q^{r-d_0} = q^{am} = (q^{m})^{a} \equiv 1 \mod {T}.\] Conversely, suppose that~\(T \mid (q^{r-d_0}-1)\) and let~\(m\) be the smallest integer~\(j\) such that~\(T \mid (q^{j}-1).\) Thus~\(m\) satisfies condition~\((1)\) above and we would like to see that~\(m\) also satisfies~\((2).\) Indeed if~\(m\) does not divide~\(r-d_0,\) we write~\(r-d_0 = mw + s\) for~\(1 \leq s \leq m-1\) and get that~\[q^{r-d_0} = \left(q^{m}\right)^{w}\cdot q^{s}  \equiv q^{s} \mod {T}.\] Since~\(q^{r-d_0} \equiv 1 \mod{T},\) we then get that~\(q^{s} \equiv 1 \mod{T},\) a contradiction to the definition of~\(m.\)

So far we have shown that if~\(f\) is biperiodic and of the form~(\ref{f_type}), then all the polynomials~\(h_i\) must have the same degree~\(m = \frac{r-d_0}{t}.\) Such an integer~\(m\) exists if and only if~\(T \mid (q^{r-d_0}-1)\) and moreover since~\(r\) and~\(r-1\) have no common factors, there cannot exist an integer~\(m\) that is a divisor of both~\(r-1\) and~\(r.\) Consequently, the above condition \(T \mid (q^{r-d_0}-1)\) holds either for~\(d_0=0\) or~\(d_0=1\) but not both.

Setting~\(\frac{r-d_0}{t} = m,\)  we again use Lemma~\ref{lidl_lemma}\((b)\) to get that the number of monic irreducible polynomials of order~\(T\) and degree~\(\frac{r-d_0}{t}\) equals~\(\varphi(T)\cdot \frac{t}{r-d_0}.\) Since the number of polynomials in~(\ref{f_type}) equals~\(t,\) we must have~\(\varphi(T) \cdot \frac{t}{r-d_0} \geq t\) which is possible if and only if~\(\varphi(T) \geq r-d_0.\) This proves~(\ref{uni_cond}).

Assuming~(\ref{uni_cond}) holds, let~\(\{{\cal C}_i\}_{1 \leq i \leq L}\) be the set of all resulting cycles as determined in Lemma~\ref{thm1}, each having either~\(T\) states or a single state. A cycle~\({\cal C}_i\) is of the  form~\(\mathbf{s} \rightarrow \mathbf{s} = (a_{-r},a_{-r+1},\ldots,a_{-1})\) if and only if~\[a_{-r} = a_{-r+1} = \ldots = a_{-1} =  \beta\] for some~\(\beta \in GF(q).\)  Therefore if~\(d_0 = 0\)  then~\(f(1) = 1-\sum_{i=1}^{r}c_i \neq 0\) and so the only cycle with containing a single state is the cycle~\(\mathbf{0} \rightarrow \mathbf{0}.\) If~\(d_0 = 1,\) then~\(\sum_{i=1}^{r}c_i = 1\) and there are~\(q\) cycles of the form~\(\beta \mathbf{1} \rightarrow \beta \mathbf{1}, \beta \in GF(q)\) each containing one state, where~\(\mathbf{1}\) denotes the all ones state~\((1,1,\ldots,1).\)~\(\qed\)

\end{proof}

\begin{proof}[Proof of Theorem~\ref{uni_per_thm}\((ii)\)] We first eliminate the case~\(M \geq 2\) by contradiction. Assume there exists a biperiodic polynomial~\(f\) with degree~\(r\) and biperiod~\(T\) of the form~(\ref{f_type2}) where~\(\{g_i\}_{1 \leq i \leq t}\) are distinct irreducible polynomials having order~\(\theta(g_i) \geq 2.\) If~\(d_0 \geq 2,\) then from Lemma~\ref{lidl_lemma}\((a),\) we get that the order of~\((1-x)^{d_0}\) is of the form~\(p^{b}\) for some~\(b \geq 1.\) From Lemma~\ref{thm1} we know that~\(p^{b} \in {\cal P} = \{1,T\}\) and this is a contradiction since~\(T = p^{u} \cdot M\) where~\(M \geq 2\) is not a multiple of~\(p.\) Thus~\(d_0 \leq 1\) and arguing as in the proof of Theorem~\ref{uni_per_thm}\((i)\) above, we have that~\(d_i = 1\) for all~\(1 \leq i \leq t\) and that each~\(g_i\) has order~\(\theta(g_i) = T.\)

Next if~\(g_i\) has degree~\(m_i,\) then its order~\(T \mid (q^{m_i}-1)\) and so~\[q^{m_i}-1 = p^{n_i}-1 \equiv 0 \mod T\]
for some integer~\(n_i \geq 1.\) In particular this means that~\(T\) cannot contain~\(p\) as a divisor since~\(\gcd(p,p^{n_i}-1) = 1\) and this is a contradiction. This completes the case~\(M \geq 2.\)

Now suppose~\(T = p^{u}\) and let~\(f\) be any biperiodic polynomial with degree~\(r\) and biperiod~\(T\) of the form~(\ref{f_type2}) where~\(\{g_i\}_{1 \leq i \leq t}\) are distinct irreducible polynomials having order~\(\theta(g_i) \geq 2.\) If~\(d_i \geq 1\) for some~\(i,\) then from Lemma~\ref{thm1} we get that~\(\theta(g_i) \in {\cal P}\) and arguing as in the previous paragraph we get that~\(\gcd(\theta(g_i),p) = 1.\) This is a contradiction since~\({\cal P} = \{1,p^{u}\}.\) Thus~\(d_i  = 0\) for all~\(1 \leq i \leq t\) and so~\(f = (1-x)^{d_0}.\) Moreover, since~\(f\) is of degree~\(r,\) we must have that~\(d_0 =r.\)

From Lemma~\ref{lidl_lemma}\((a)\) we have that the order of~\((1-x)^{r}\) is~\(p^{b}\) where~\(b\) satisfies~\(p^{b-1} < r \leq p^{b}.\)
If~\(b \geq 2,\) then there exists an integer~\(e_0 \leq r-1\) such that~\(p^{b-2} < e_0 \leq p^{b-1}\) and so~\((1-x)^{e_0}\) has order~\(p^{b-1}.\)
By Lemma~\ref{thm1}, we then have that both~\(p^{b}\) and~\(p^{b-1}\) belong to~\({\cal P},\) a contradiction. Thus~\(b=1\)
and we must have that~\(1 < r \leq p.\) In this case the characteristic polynomial is~\(f(x) = (1-x)^{r}\) with period set~\(\{1,p\}.\) Arguing as in the last paragraph of proof of Theorem~\ref{uni_per_thm}~\((i),\) we get that there are~\(N\) cycles containing~\(p\) states each and~\(q\) cycles containing one state each.~\(\qed\)

\end{proof}

\subsubsection*{Acknowledgement}
I thank Professors V. Arvind, C. R. Subramanian and the referees for crucial comments that led to an improvement of the paper. I also thank IMSc for my fellowships.

\bibliographystyle{plain}

\end{document}